
\documentclass{ws-ijbc}
\usepackage{ws-rotating}     
\usepackage{graphicx}
\usepackage{epstopdf}
\begin{document}

\catchline{}{}{}{}{} 

\markboth{Eduard Musafirov, Alexander Grin, Andrei Pranevich}{Admissible Perturbations of a Generalized Langford System}

\title{ADMISSIBLE PERTURBATIONS OF A GENERALIZED LANGFORD SYSTEM}

\author{EDUARD MUSAFIROV}
\address{Department of Technical Mechanics, Yanka Kupala State University of Grodno,\\
Ozheshko Street 22, Grodno, 230023, Belarus\\
musafirov@bk.ru}

\author{ALEXANDER GRIN}
\address{Department of Mathematical Analysis, Differential Equations and Algebra, Yanka Kupala State University of Grodno,\\
Ozheshko Street 22, Grodno, 230023, Belarus\\
grin@grsu.by}

\author{ANDREI PRANEVICH}
\address{Department of Mathematical and Information Support of Economic Systems,
Yanka Kupala State University of Grodno,\\
Ozheshko Street 22, Grodno, 230023, Belarus\\
pranevich@grsu.by}

\maketitle

\begin{history}
\received{(to be inserted by publisher)}
\end{history}

\begin{abstract}
Admissible perturbations (i.e., perturbations that do not change the Mironenko reflecting function of the system) are obtained for an autonomous three-dimensional quadratic generalized Langford system with five parameters. The obtained non-autonomous perturbed systems retain many of the qualitative properties of solutions of the original system. In particular, the instability (in the sense of Lyapunov) of the equilibrium point, the presence of a periodic solution and its asymptotic stability (instability) are proved for perturbed systems. The presence of similar chaotic attractors in the original and perturbed systems is shown by numerical simulation.
\end{abstract}

\keywords{Mironenko reflecting function, Lyapunov stability, periodic solution, asymptotic stability, chaotic attractor.}

\section{Introduction}
\noindent \citet{evm:Mironenko1984} introduced the notion of the reflecting function for the qualitative investigations of the ODE system
\begin{equation} \label{evm:EQ1}
\dot{x}=X(t,x),\quad t\in {\mathbb R}, x\in D\subset {\mathbb R}^{n}
\end{equation}
under the condition that $X(t,x)$ is continuously differentiable function. This function is known now as Mironenko reflecting function (MRF) and has been efficiently applied by many authors to solve such problems of the qualitative theory of ODEs as the existence and stability of periodic solutions \cite{evm:Mironenko1989, evm:Belskii2013, evm:Liu2014, evm:Maiorovskaya2009, evm:Musafirov2008, evm:ZhouJ2020}, the existence of solutions for boundary value problems \cite{evm:Mironenko1996, evm:Musafirov2002, evm:Varenikova2012}, the solution of the center-focus problem \cite{evm:Zhou2017}, study of the global behavior of families of solutions for ODE systems \cite{evm:Mironenko2004book} and others \cite{evm:Mironenko2004book, evm:Belokurskii2013}. Moreover, it was proved that solutions of different ODE systems with the same MRF have many of the same qualitative properties \cite{evm:Mironenko2004book, evm:Mironenko2009}. Therefore, the study of the qualitative properties of solutions for a whole class of systems with the same MRF can be reduced to corresponding study of the simple (well-studied) system. In such cases non-autonomous systems \eqref{evm:EQ1} can be investigated on the base of corresponding autonomous system. In other words, an autonomous system can be perturbed into a non-autonomous systems \eqref{evm:EQ1} by using special perturbations preserving MRF which are called as \textit{admissible perturbations} (for example, admissible perturbations of the Lorenz-84 climate model were obtained by \citet{evm:Musafirov2019b}).

In this paper the describered approuch is applied for the generalized Langford system \cite{evm:Yang2018}:
\begin{equation} \label{evm:EQ2}
\begin{array}{l} {\dot{x}=ax+by+xz,} \\ {\dot{y}=cx+dy+yz,} \\ {\dot{z}=ez-\left(x^{2} +y^{2} +z^{2} \right);\quad (x,y,z)\in {\mathbb R}^{3} ,} \end{array}
\end{equation}
where $a,b,c,d,e\in {\mathbb R}$ are parameters of the system.

\citet{evm:Yang2018} analyzed the stability of equilibrium points, obtained an exact expression for a periodic orbit and some approximate expressions for limit cycles, investigated the nature of their stability, proved the existence of two heteroclinic cycles and their coexistence with a periodic orbit.

\citet{evm:Nikolov2021} considered a particular case of system \eqref{evm:EQ2} for $c=-b$, $d=a\ne 0$ and showed that system \eqref{evm:EQ2} in this case is equivalent to the nonlinear force-free Duffing oscillator $\ddot{x}+k\dot{x}+\omega x+x^{3} =0$, where $k=-(2a+e)$, $\omega =a(a+e)$. Such an equation is obtained, for example, when a steel console oscillates in an inhomogeneous field of two permanent magnets \cite{evm:Moon1979}; oscillation of a mathematical pendulum at small angles of deflection; vibrations of mass on a spring with a nonlinear restoring force located on a flat horizontal surface; and also when describing the motion of a particle in a potential of two wells and other oscillations \cite{evm:kovacic2011}. In addition, \citet{evm:Nikolov2021} proved that in this particular case, under one of three additional conditions ($e=a$ or $e=-a/2$ or $e=-2a$), the solutions of system \eqref{evm:EQ2} are expressed in explicit analytical form by means of elementary and Jacobi elliptic functions.

For the particular case of system \eqref{evm:EQ2} when $a=d=-1/3$, $b=-1$, $c=1$, $e=2/3$, the presence of chaos in the system is proved, and the chaotic attractor is also shown by \citet{evm:Belozyorov2015}.

The admissible perturbations of the non-generalized Langford system for $a=d=-2e-1$, $b=-1$, $c=1$ and for $a=d=e-1$, $b=-1$, $c=1$ were obtained by \citet{evm:Musafirov2016, evm:Musafirov2017}.

The main our purpose here is to derive a non-autonomous generalization for system \eqref{evm:EQ2} and detect qualitative properties for equilibrium points and periodic solutions of the derived system.

The structure of our paper is as follows. In section 2 we recall the definition of the MRF and basic facts for the construction of an admissible perturbations of system \eqref{evm:EQ1}. In section 3 we represent the sets of admissible perturbations of system \eqref{evm:EQ2}. In section 4 we prove the instability (in the sense of Lyapunov) of the equilibrium point $O(0,0,0)$ of admissibly perturbed systems. Section 5 presents the conditions under which admissibly perturbed systems have periodic solutions, as well as conditions for the asymptotic stability (instability) of periodic solutions. In the last section, using numerical simulations, we show similar chaotic attractors of the generalized Langford system \eqref{evm:EQ2} and an admissibly perturbed system.

\section{Brief theory of the MRF}
\noindent
First of all, we give a brief information on the theory of the MRF from \cite{evm:Mironenko2004book}.

For system \eqref{evm:EQ1}, MRF is defined as $F(t,x):=\varphi (-t;t,x)$, where $x=\varphi (t;t_{0} ,x_{0} )$ is the general solution in the Cauchy form of system \eqref{evm:EQ1}. Although the MRF is determined through the general solution of system \eqref{evm:EQ1}, it is sometimes possible to find a MRF even for non-integrable systems.

A function $F(t,x)$ is a MRF of system \eqref{evm:EQ1} if and only if it is a solution of the PDE system $\frac{\partial F}{\partial t} +\frac{\partial F}{\partial x} X(t,x)+X(-t,F)=0$ with the initial condition $F(0,x)=x$.

If the function $F(t,x)$ is continuously differentiable and satisfies the condition $F\left(-t,F(t,x)\right)\equiv F(0,x)\equiv x$, then it is the MRF of a set of systems. Moreover, all systems from this set have the same shift operator on any interval $(-\alpha ;{\kern 1pt} \alpha )$ \cite{evm:Krasnoselskii2007}. If system \eqref{evm:EQ1} is $2\omega $-periodic with respect to $t$, and $F(t,x)$ is its MRF, then $F(-\omega ,x)=\varphi (\omega ;-\omega ,x)$ is the mapping of the system over the period $[-\omega ,{\kern 1pt} \omega ]$ (Poincar\'{e} map). And therefore, all $2\omega $-periodic (with respect to $t$) systems from the set with the same MRF have the same mapping over the period $[-\omega ,{\kern 1pt} \omega ]$.

Let $2\omega $-periodic (with respect to $t$) system \eqref{evm:EQ1} and the system
\begin{equation} \label{evm:EQ3}
\dot{x}=Y(t,x),\quad t\in {\mathbb R},\; x\in D\subset {\mathbb R}^{n}
\end{equation}
have the same MRF $F(t,x)$. If the solution $\varphi (t;-\omega ,x)$ of system \eqref{evm:EQ1} and the solution $\psi (t;-\omega ,x)$ of system \eqref{evm:EQ3} are extendable to $[-\omega ,\omega ]$, then the mapping over the period $[-\omega ,\omega ]$ for system \eqref{evm:EQ1} is $\varphi (\omega ;-\omega ,x)\equiv F(-\omega ,x)\equiv \psi (\omega ;-\omega ,x)$, although system \eqref{evm:EQ3} may be non-periodic. That is, it is possible to establish a one-to-one correspondence between the $2\omega $-periodic solutions of system \eqref{evm:EQ1} and the solutions of the two-point boundary value problem $y(-\omega )=y(\omega )$ for system \eqref{evm:EQ3}.

Thanks to \citet{evm:Mironenko2009}, it became possible to find out whether two different systems of ODEs have the same MRF (in this case, the MRF itself may not be known).

\begin{theorem}[\cite{evm:Mironenko2009}]
 Let the vector functions $\Delta _{i} (t,x)$ ($i=\overline{1,m}$, where $m\in {\mathbb N}$ or $m=\infty $) be solutions of the equation
\begin{equation} \label{evm:EQ4}
\frac{\partial \Delta }{\partial t} +\frac{\partial \Delta }{\partial x} X-\frac{\partial X}{\partial x} \Delta =0
\end{equation}
and $\alpha _{i} (t)$ be any scalar continuous odd functions. Then MRF of every perturbed system of the form $\dot{x}=X(t,x)+\sum _{i=1}^{m}\alpha _{i} (t)\Delta _{i} (t,x) ,\quad t\in {\mathbb R},\; x\in D\subset {\mathbb R}^{n} $ is equal to MRF of system \eqref{evm:EQ1}.
\end{theorem}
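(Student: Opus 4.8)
The plan is to show that each perturbation direction $\alpha_i(t)\Delta_i(t,x)$ preserves the MRF, and then that adding finitely (or countably) many such directions still preserves it. Since the theorem characterizes when two systems share the same MRF through the solvability of equation \eqref{evm:EQ4}, the natural route is to verify directly that the perturbed vector field satisfies the same defining PDE for the MRF recalled in Section 2. Concretely, I would let $F(t,x)$ denote the MRF of the unperturbed system \eqref{evm:EQ1}, so that $F$ satisfies $\frac{\partial F}{\partial t}+\frac{\partial F}{\partial x}X(t,x)+X(-t,F)=0$ together with $F(0,x)=x$, and then check that this same $F$ satisfies the analogous PDE with $X$ replaced by the perturbed field $\tilde{X}(t,x):=X(t,x)+\sum_{i=1}^m\alpha_i(t)\Delta_i(t,x)$.

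Let me think through the substitution. Writing out the MRF-defining PDE for $\tilde X$ gives
\[
\frac{\partial F}{\partial t}+\frac{\partial F}{\partial x}\tilde X(t,x)+\tilde X(-t,F)=0.
\]
Subtracting the PDE already satisfied by $F$ for the unperturbed field, the obstruction reduces to showing that
\[
\sum_{i=1}^m\Bigl(\alpha_i(t)\,\frac{\partial F}{\partial x}\Delta_i(t,x)+\alpha_i(-t)\Delta_i(-t,F)\Bigr)=0.
\]
Here is where the two hypotheses enter. The oddness of each $\alpha_i$ means $\alpha_i(-t)=-\alpha_i(t)$, so the cross term carries a sign flip, and it remains to prove that for each fixed $i$ the quantity $\frac{\partial F}{\partial x}\Delta_i(t,x)$ equals $\Delta_i(-t,F(t,x))$. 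This last identity is precisely what equation \eqref{evm:EQ4} should guarantee: $\Delta_i$ being a solution of the linear variational-type equation $\partial_t\Delta+\partial_x\Delta\,X-\partial_x X\,\Delta=0$ is the infinitesimal condition for $\Delta_i$ to be compatible with the flow in the way the reflecting function requires.

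I would establish the key identity $\frac{\partial F}{\partial x}(t,x)\,\Delta_i(t,x)=\Delta_i\bigl(-t,F(t,x)\bigr)$ by differentiating \eqref{evm:EQ4}'s consequence along the flow, or equivalently by introducing $G_i(t,x):=\frac{\partial F}{\partial x}\Delta_i(t,x)-\Delta_i(-t,F(t,x))$ and showing that $G_i$ satisfies a linear homogeneous first-order PDE with $G_i(0,x)=0$, whence $G_i\equiv 0$ by uniqueness. The verification that $G_i$ vanishes at $t=0$ is immediate from $F(0,x)=x$ and $\partial_x F(0,x)=I$; the fact that $G_i$ is annihilated by the transport operator is exactly where \eqref{evm:EQ4} is used, after differentiating the MRF-PDE for $F$ in $x$ to extract how $\partial_x F$ evolves.

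The main obstacle, as I see it, is precisely this middle step: manufacturing and verifying the evolution equation for $G_i$ requires carefully differentiating the defining PDE of $F$ with respect to $x$ and matching it against \eqref{evm:EQ4} evaluated at the reflected argument $(-t,F(t,x))$, keeping track of the chain rule through $F$ and the sign coming from the $-t$ argument. Once that identity is in hand, summing over $i$ with the odd functions $\alpha_i$ makes the perturbation terms cancel in pairs, so passing from one perturbation direction to the full sum (finite, or $m=\infty$ under suitable convergence) is routine. I would finish by invoking the uniqueness of solutions to the MRF-PDE with the given initial condition to conclude that $F$ is also the MRF of the perturbed system, which is the assertion of the theorem.
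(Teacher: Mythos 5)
The paper does not prove this statement at all: it is quoted verbatim from Mironenko's 2009 paper (Theorem 1 is a cited result, used later only as a black box in the proof of Theorem 2), so there is no in-paper argument to compare yours against. Judged on its own merits, your proposal is correct and is essentially the standard proof of this result. The reduction is right: using the characterization of the MRF as the solution of $\frac{\partial F}{\partial t}+\frac{\partial F}{\partial x}X+X(-t,F)=0$, $F(0,x)=x$, and the oddness $\alpha_i(-t)=-\alpha_i(t)$, everything collapses to the single identity $\frac{\partial F}{\partial x}(t,x)\,\Delta_i(t,x)=\Delta_i(-t,F(t,x))$, and your transport argument for $G_i$ does close: differentiating the MRF--PDE in $x$ gives $(\partial_t+X\cdot\partial_x)\frac{\partial F}{\partial x}=-\frac{\partial F}{\partial x}\frac{\partial X}{\partial x}(t,x)-\frac{\partial X}{\partial x}(-t,F)\frac{\partial F}{\partial x}$, while \eqref{evm:EQ4} evaluated at $(t,x)$ and at $(-t,F)$ yields $(\partial_t+X\cdot\partial_x)G_i=-\frac{\partial X}{\partial x}(-t,F)\,G_i$ with $G_i(0,x)=0$, so $G_i\equiv 0$. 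A marginally cleaner route to the same identity, which you may prefer, is to note that \eqref{evm:EQ4} says exactly that $\Delta_i$ is transported by the flow, i.e.\ $\frac{\partial\varphi(t_1;t_0,x)}{\partial x}\Delta_i(t_0,x)=\Delta_i(t_1,\varphi(t_1;t_0,x))$; this follows from a one-line linear ODE along each trajectory via the variational equation, and specializing to $t_1=-t_0$ gives the identity since $F(t,x)=\varphi(-t;t,x)$. The only points to make explicit in a full write-up are the invocation of the ``if and only if'' characterization of the MRF by that PDE (so that satisfying the perturbed PDE really certifies $F$ as the perturbed system's MRF) and, for $m=\infty$, a convergence hypothesis permitting termwise differentiation, which you already flag.
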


\section{Admissible perturbations}
\noindent
For system \eqref{evm:EQ2}, we were looking for admissible perturbations of the form $\Delta \cdot \alpha (t)$, where
\[\Delta =\left(
 {\sum _{i+j+k=0}^{n} q_{ijk} x^{i} y^{j} z^{k} },\quad {\sum _{i+j+k=0}^{n} r_{ijk} x^{i} y^{j} z^{k} },\quad {\sum _{i+j+k=0}^{n} s_{ijk} x^{i} y^{j} z^{k} }
\right)^{{\rm T}} ,\]
$q_{ijk} ,r_{ijk} ,s_{ijk} \in {\mathbb R}$, $i,j,k,n\in {\mathbb N}\cup \{ 0\} $; $\alpha (t)$ is an arbitrary continuous scalar odd function. To do this, we looked for the values of the parameters $a,b,c,d,e$, $q_{ijk} ,r_{ijk} ,s_{ijk} $ for which the relation \eqref{evm:EQ4} is valid, i.e. the relation $\frac{\partial \Delta }{\partial t} +\frac{\partial \Delta }{\partial (x,y,z)} X(t,x,y,z)-\frac{\partial X(t,x,y,z)}{\partial (x,y,z)} \Delta =0$ where
 $X(t,x,y,z)=\left( {ax+by+xz}, {cx+dy+yz}, {ez-x^{2} -y^{2} -z^{2} } \right)^{{\rm T}} $
is the right-hand side of the original unperturbed system \eqref{evm:EQ2}. As a result, we were able to obtain the following statement.

\begin{theorem}
Let $\alpha _{i} (t)$ ($i=\overline{1,5}$) be arbitrary scalar continuous odd functions. Then
\begin{romanlist}
\item the MRF of system \eqref{evm:EQ2} coincides with the MRF of the system
\begin{eqnarray}
 \dot{x} & = & \left(ax+by+xz\right)\left(1+\alpha _{1} (t)\right),\nonumber \\
 \dot{y} & = & \left(cx+dy+yz\right)\left(1+\alpha _{1} (t)\right),\nonumber \\
 \dot{z} & = & \left(ez-\left(x^{2} +y^{2} +z^{2} \right)\right)\left(1+\alpha _{1} (t)\right);\nonumber
\end{eqnarray}

\item  for $c=-b$, $d=a$, the MRF of system \eqref{evm:EQ2} coincides with the MRF of the system
\begin{eqnarray}
\dot{x} & = & \left(ax+by+xz\right)\left(1+\alpha _{1} \left(t\right)\right)+x\left(a+z\right)\alpha _{2} \left(t\right)+y\alpha _{3} \left(t\right),\nonumber \\
\dot{y} & = & \left(-bx+ay+yz\right)\left(1+\alpha _{1} \left(t\right)\right)+y\left(a+z\right)\alpha _{2} \left(t\right)-x\alpha _{3} \left(t\right),\label{evm:EQ5} \\
\dot{z} & = & \left(ez-x^{2} -y^{2} -z^{2} \right)\left(1+\alpha _{1} \left(t\right)+\alpha _{2} \left(t\right)\right);\nonumber
\end{eqnarray}

\item  for $c=-b$, $d=a$, $e=-2a$, the MRF of system \eqref{evm:EQ2} coincides with the MRF of the system
\begin{eqnarray}
 \dot{x} & = & \left(ax+by+xz\right)\left(1+\alpha _{1} \left(t\right)\right)+x\left(a+z\right)\alpha _{2} \left(t\right)+y\alpha _{3} \left(t\right) \nonumber\\
   &  & -y\left(x^{2} +y^{2} \right)\left(4az+x^{2} +y^{2} +2z^{2} \right)\alpha _{4} \left(t\right), \nonumber\\
 \dot{y} & = & \left(-bx+ay+yz\right)\left(1+\alpha _{1} \left(t\right)\right)+y\left(a+z\right)\alpha _{2} \left(t\right)-x\alpha _{3} \left(t\right) \label{evm:EQ6} \\
  &  & +x\left(x^{2} +y^{2} \right)\left(4az+x^{2} +y^{2} +2z^{2} \right)\alpha _{4} \left(t\right), \nonumber\\
  \dot{z} & = & -\left(2az+x^{2} +y^{2} +z^{2} \right)\left(1+\alpha _{1} \left(t\right)+\alpha _{2} \left(t\right)\right); \nonumber
\end{eqnarray}

\item  for $c=b=0$, $d=a$, $e=-2a$, the MRF of system \eqref{evm:EQ2} coincides with the MRF of the system
\begin{eqnarray}
 \dot{x} & = & \left(ax+xz\right)\left(1+\alpha _{1} \left(t\right)\right)+y\alpha _{2} \left(t\right) \nonumber\\
   & & +y\left(4az+x^{2} +y^{2} +2z^{2} \right)\left(x^{2} \alpha _{3} \left(t\right)+xy\alpha _{4} \left(t\right)+y^{2} \alpha _{5} \left(t\right)\right), \nonumber\\
 \dot{y} & = & \left(ay+yz\right)\left(1+\alpha _{1} \left(t\right)\right)-x\alpha _{2} \left(t\right) \label{evm:EQ7} \\
    & & -x\left(4az+x^{2} +y^{2} +2z^{2} \right)\left(x^{2} \alpha _{3} \left(t\right)+xy\alpha _{4} \left(t\right)+y^{2} \alpha _{5} \left(t\right)\right), \nonumber\\
 \dot{z} & = & -\left(2az+x^{2} +y^{2} +z^{2} \right)\left(1+\alpha _{1} \left(t\right)\right).\nonumber
\end{eqnarray}
\end{romanlist}
\end{theorem}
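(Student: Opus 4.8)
The plan is to apply the theorem of \citet{evm:Mironenko2009} recalled above, so that it suffices, for each item, to exhibit autonomous vector fields $\Delta_i$ solving equation \eqref{evm:EQ4} and then to observe that the right-hand side of the displayed system is exactly $X+\sum_i\alpha_i(t)\Delta_i$. Because the unperturbed field $X$ is autonomous we have $\partial X/\partial t=0$, and I would search only among time-independent $\Delta_i$, for which $\partial\Delta_i/\partial t=0$ and equation \eqref{evm:EQ4} collapses to $\frac{\partial\Delta_i}{\partial(x,y,z)}X-\frac{\partial X}{\partial(x,y,z)}\Delta_i=0$; that is, $\Delta_i$ must commute with $X$ as a vector field. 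Two elementary facts will organize the entire verification. First, equation \eqref{evm:EQ4} is linear and homogeneous in $\Delta$, so its solutions form a vector space. Second, for any scalar function $\phi$ one has $\frac{\partial(\phi\Delta)}{\partial(x,y,z)}X-\frac{\partial X}{\partial(x,y,z)}(\phi\Delta)=(L_X\phi)\,\Delta+\phi\bigl(\tfrac{\partial\Delta}{\partial(x,y,z)}X-\tfrac{\partial X}{\partial(x,y,z)}\Delta\bigr)$, where $L_X\phi=\frac{\partial\phi}{\partial(x,y,z)}X$ is the derivative of $\phi$ along $X$. Hence any constant-coefficient combination of solutions is again a solution, and the product of a solution with a first integral of $X$ (a $\phi$ with $L_X\phi=0$) is again a solution.

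With these remarks the four items reduce to assembling a short list of commuting fields and first integrals. The field $\Delta_1=X$ solves \eqref{evm:EQ4} trivially for all parameters, which gives item (i). Next I would verify by direct substitution that the rotation generator $R:=(y,-x,0)^{\mathrm{T}}$ satisfies \eqref{evm:EQ4} precisely when $c=-b$ and $d=a$; indeed the obstruction computes to $\bigl((b+c)x+(d-a)y,\,(d-a)x-(b+c)y,\,0\bigr)^{\mathrm{T}}$, which vanishes identically exactly under these two constraints. Under them $\Delta_3=R$ is a solution, and by linearity so is $\Delta_2=X-bR=\bigl((a+z)x,(a+z)y,ez-x^2-y^2-z^2\bigr)^{\mathrm{T}}$; this yields item (ii).

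For items (iii) and (iv) the further constraints serve to produce first integrals to multiply against $R$. Setting $u=x^2+y^2$ and $w=4az+x^2+y^2+2z^2$, a short computation along \eqref{evm:EQ2} with $c=-b$, $d=a$ gives $L_X u=2(a+z)u$ and, exactly when $e=-2a$, $L_X w=-2(a+z)w$; the matched logarithmic derivatives force $L_X(uw)=0$, so $\phi=uw=(x^2+y^2)(4az+x^2+y^2+2z^2)$ is a first integral and $\Delta_4=\phi R$ (up to the sign absorbed into $\alpha_4$) solves \eqref{evm:EQ4}, giving item (iii). Imposing in addition $b=0$ removes the cross terms in $L_X(x^2)$, $L_X(xy)$, $L_X(y^2)$, after which each of $x^2w$, $xyw$, $y^2w$ is separately a first integral; then $x^2wR$, $xywR$, $y^2wR$ together with $\Delta_2=R$ solve \eqref{evm:EQ4}, which is item (iv). Since every $\alpha_i(t)$ is a continuous odd scalar function, all hypotheses of the cited theorem hold and each perturbed system carries the same MRF as \eqref{evm:EQ2}.

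The main obstacle I anticipate is not the bracket algebra but the discovery and tight verification of the first integrals: guessing the combination $w=4az+x^2+y^2+2z^2$ and confirming that $L_X w=-2(a+z)w$ holds if and only if $e=-2a$, and that passing from the single integral $uw$ of item (iii) to the three monomial integrals $x^2w$, $xyw$, $y^2w$ of item (iv) requires exactly the additional restriction $b=0$. Everything else---the vanishing brackets for $\Delta_1=X$ and (for $c=-b$, $d=a$) for $R$, and the reduction of \eqref{evm:EQ4} to a commutation condition---is routine once the correct invariants are in hand.
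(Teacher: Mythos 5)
Your proposal is correct, and at the top level it follows the same route as the paper: both invoke Theorem~1 and reduce everything to exhibiting vector fields $\Delta_i$ that satisfy equation \eqref{evm:EQ4}, the perturbed right-hand sides being exactly $X+\sum_i\alpha_i(t)\Delta_i$. Where you differ is in how the verification is organized. The paper simply writes down each $\Delta_i$ and checks \eqref{evm:EQ4} by brute-force matrix substitution (carried out explicitly only for $\Delta_2$ of item (ii), with the remaining cases declared ``similar''). You instead exploit the structure of the solution set: linearity of \eqref{evm:EQ4}, the reduction to the commutation condition $\frac{\partial\Delta}{\partial(x,y,z)}X-\frac{\partial X}{\partial(x,y,z)}\Delta=0$ for autonomous $\Delta$, and the product rule showing that a first integral times a solution is again a solution. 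This lets you derive all the perturbation terms from two seeds --- $X$ itself and the rotation generator $R=(y,-x,0)^{\mathrm T}$ --- together with the first integrals $(x^2+y^2)w$, $x^2w$, $xyw$, $y^2w$, $w=4az+x^2+y^2+2z^2$. Your computations check out: the obstruction for $R$ is $\bigl((b+c)x+(d-a)y,\,(d-a)x-(b+c)y,\,0\bigr)^{\mathrm T}$, forcing $c=-b$, $d=a$; $\Delta_2=X-bR$ recovers the paper's $\bigl(x(a+z),y(a+z),ez-x^2-y^2-z^2\bigr)^{\mathrm T}$; $L_Xw=-2(a+z)w$ holds precisely when $e=-2a$, matching $L_Xu=2(a+z)u$; and $b=0$ kills the cross term $2bxy$ in $L_X(x^2)$ so that the three monomial integrals split. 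What your approach buys is an explanation of \emph{where} the admissible perturbations come from and \emph{why} each successive parameter restriction appears, rather than a list of identities verified after the fact; what the paper's approach buys is that it requires no auxiliary lemmas and is checkable line by line. Either argument is complete once the case-by-case computations are written out.
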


\begin{proof}
Let us prove the second assertion of the theorem. For $c=-b$, $d=a$, the right-hand side of system \eqref{evm:EQ2} is $X=\left( {ax+by+xz}, {-bx+ay+yz}, {ez-x^{2} -y^{2} -z^{2} } \right)^{{\rm T}}$ and its Jacobi matrix is
\[\frac{\partial X(t,x,y,z)}{\partial (x,y,z)} =
\left(
\begin{array}{ccc}
 {a+z} & {b} & {x} \\
 {-b} & {a+z} & {y} \\
 {-2x} & {-2y} & {e-2z}
\end{array}
\right).\]
Let us write out the vector factors for $\alpha _{i} (t)$ from the right-hand side of system \eqref{evm:EQ5}:
\(\Delta _{1} = \left( {ax+by+xz}, {-bx+ay+yz}, {ez-x^{2} -y^{2} -z^{2} } \right)^{{\rm T}}\),
\(\Delta _{2} = \left( {x\left(a+z\right)}, {y\left(a+z\right)}, {ez-x^{2} -y^{2} -z^{2}}\right)^{{\rm T}}\),
\(\Delta _{3} = \left(  {y}, {-x}, {0} \right)^{{\rm T}}\).
By successively checking the identity \eqref{evm:EQ4} for each vector-multiplier $\Delta _{i} $ we will make sure that it is true. Let us show this, for example, for $\Delta _{2} $. The Jacobi matrix is
\[\frac{\partial \Delta _{2} }{\partial (x,y,z)} =
\left(
\begin{array}{ccc}
 {a+z} & {0} & {x} \\
 {0} & {a+z} & {y} \\
 {-2x} & {-2y} & {e-2z}
\end{array}
\right).\]
Whence we obtain
\begin{multline*}
 \frac{\partial \Delta _{2} }{\partial t} +\frac{\partial \Delta _{2} }{\partial (x,y,z)} X(t,x,y,z)-\frac{\partial X(t,x,y,z)}{\partial (x,y,z)} \Delta _{2}  \\
  \equiv \left(
  \begin{array}{c}
    {0} \\
    {0} \\
    {0}
  \end{array}
\right)+\left(
 \begin{array}{ccc}
  {a+z} & {0} & {x} \\
   {0} & {a+z} & {y} \\
   {-2x} & {-2y} & {e-2z}
  \end{array}
\right)\left(
 \begin{array}{c}
   {ax+by+xz} \\
   {-bx+ay+yz} \\
   {ez-x^{2} -y^{2} -z^{2} }
 \end{array}
\right) \\
-\left(
\begin{array}{ccc}
 {a+z} & {b} & {x} \\
 {-b} & {a+z} & {y} \\
 {-2x} & {-2y} & {e-2z}
\end{array}
\right)\left(
\begin{array}{c}
 {x\left(a+z\right)} \\
 {y\left(a+z\right)} \\
 {ez-x^{2} -y^{2} -z^{2} }
\end{array}
\right)\equiv \left(
\begin{array}{c}
 {0} \\
 {0} \\
 {0}
\end{array}
\right).
\end{multline*}
Then the second assertion of the theorem follows from Theorem 1. The rest of the statement of the theorem can be proved similarly.
\end{proof}

 When modeling real processes, the time $t\ge 0$ is usually considered, therefore the requirement that the functions $\alpha _{i} (t)$ be odd is not essential, since they can be extended continuously in an odd way to the negative time semi-axis (provided that $\alpha _{i} (0)=0$).

Theorem 2 can be used to study the qualitative behavior of the solutions of admissible perturbed systems.

\section{Instability of equilibrium point}
\noindent
By Theorem 1 \cite{evm:Yang2018}, for $e=0$, the equilibrium point $O(0,0,0)$ of system \eqref{evm:EQ2} is unstable. With this in mind, let us prove a similar statement for systems \eqref{evm:EQ5} -- \eqref{evm:EQ7}.

\begin{theorem}
Let $\alpha _{i} (t)$ ($i=\overline{1,5}$) be scalar continuous functions (not necessarily odd).
\begin{romanlist}
\item If $e=0$ and $\alpha _{1} (t)+\alpha _{2} (t)\ge l>-1$ $\forall t\ge 0$ ($l={\rm const}$), then the solution $x=y=z=0$ of system \eqref{evm:EQ5} is unstable (in the sense of Lyapunov).

\item If $a=0$ and $\alpha _{1} (t)+\alpha _{2} (t)\ge l>-1$ $\forall t\ge 0$ ($l={\rm const}$), then the solution $x=y=z=0$ of system \eqref{evm:EQ6} is unstable (in the sense of Lyapunov).

\item If $a=0$  and $\alpha _{1} (t)\ge l>-1$ $\forall t\ge 0$ ($l={\rm const}$), then the solution $x=y=z=0$ of system \eqref{evm:EQ7} is unstable (in the sense of Lyapunov).
\end{romanlist}
\end{theorem}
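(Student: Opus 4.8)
The plan is to exploit the invariance of the $z$-axis $\{x=y=0\}$ in each of the three systems and to reduce the instability question to a single scalar Riccati-type equation. First I would note that the stated hypotheses force $e=0$: this is assumed directly in case (i), and in cases (ii) and (iii) it follows from $a=0$ together with the standing relation $e=-2a$. As a result the coefficient $ez-(\cdots)$ in \eqref{evm:EQ5} and $-(2az+\cdots)$ in \eqref{evm:EQ6}, \eqref{evm:EQ7} loses its term linear in $z$ once restricted to the axis.

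Next I would verify that $\{x=y=0\}$ is invariant for each system. Substituting $x=y=0$ into the right-hand sides of \eqref{evm:EQ5}, \eqref{evm:EQ6}, \eqref{evm:EQ7}, every term of $\dot{x}$ and of $\dot{y}$ carries a factor $x$ or $y$ and therefore vanishes, so a trajectory issuing from $(0,0,z_0)$ remains on the axis for all time. On the axis the three systems collapse to the single scalar equation $\dot{z}=-z^{2}g(t)$, where $g(t)=1+\alpha_1(t)+\alpha_2(t)$ for \eqref{evm:EQ5} and \eqref{evm:EQ6}, and $g(t)=1+\alpha_1(t)$ for \eqref{evm:EQ7}. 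The hypotheses $\alpha_1+\alpha_2\ge l>-1$ (respectively $\alpha_1\ge l>-1$) then yield the uniform bound $g(t)\ge 1+l>0$ for all $t\ge 0$.

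Then I would run a comparison/blow-up argument. Choosing an initial value $z_0<0$ with $|z_0|$ as small as desired, and setting $w=-z>0$, the reduced equation becomes $\dot{w}=w^{2}g(t)\ge(1+l)w^{2}$, whence $\frac{1}{w(0)}-\frac{1}{w(t)}=\int_0^t g(s)\,ds\ge(1+l)t$. Since this lower bound diverges, $1/w(t)$ reaches $0$ in finite time, i.e.\ $w(t)=-z(t)$ leaves every bounded neighbourhood of the origin. Because such an escaping trajectory exists arbitrarily close to $O$, the equilibrium $O(0,0,0)$ is unstable in the sense of Lyapunov, and this single argument settles all three assertions at once.

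The computations here are light, so the delicate points are conceptual rather than technical. The first is the choice of sign: one must take $z_0<0$, not $z_0>0$, because for $z_0>0$ the same equation forces $z(t)\to 0^{+}$, making the positive half-axis attracting and useless for instability. The second, which I expect to be the real crux the proof must exploit, is that the uniform lower bound $g\ge 1+l>0$ is precisely what guarantees the divergence of $\int_0^t g$; mere positivity $g(t)>0$ would be insufficient, since then $w$ could remain bounded and no escape would follow. Hence the exact form of the hypotheses on the $\alpha_i$ is essential rather than decorative.
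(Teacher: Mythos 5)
Your proposal is correct, but it follows a genuinely different route from the paper. The paper's proof is a Chetaev-type argument: it takes $V(x,y,z)=-z^{3}$, computes $\dot V=3z^{2}\left(x^{2}+y^{2}+z^{2}\right)\left(1+\alpha_{1}(t)+\alpha_{2}(t)\right)\ge 3z^{2}\left(x^{2}+y^{2}+z^{2}\right)(1+l)$ along trajectories, and then invokes an instability theorem for nonautonomous systems (Theorem 4.7.1 of Liao et al.). You instead observe that the $z$-axis is invariant, that on it each system reduces to the scalar Riccati equation $\dot z=-z^{2}g(t)$ with $g\ge 1+l>0$, and you integrate $1/w$ explicitly to get finite-time escape for arbitrarily small $z_{0}<0$. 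All the steps check out: the axis is indeed invariant for \eqref{evm:EQ5}--\eqref{evm:EQ7} (every term of $\dot x,\dot y$ carries a factor $x$ or $y$), the hypotheses do kill the linear term in $\dot z$ on the axis, and the sign discussion ($z_{0}<0$ escapes, $z_{0}>0$ decays) is right. What your approach buys is self-containedness and a stronger conclusion (finite-time blow-up along the axis rather than mere instability); it also quietly sidesteps a weak point of the paper's argument, namely that $3z^{2}\left(x^{2}+y^{2}+z^{2}\right)$ vanishes on the whole plane $z=0$, so $\dot V$ is only positive semi-definite and the cited theorem must be applied with some care. What the paper's approach buys is uniformity: a single Lyapunov-function computation covers all three systems without needing to identify an invariant manifold, and it is the standard template for such statements. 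Either way, your argument is complete and valid as written.
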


\begin{proof}
Consider the function $V(x,y,z)=-z^{3} $. In any neighborhood of the origin of $ {\mathbb R}^{3} $, the function $V$ is bounded and exist a region such that $V>0$.
\begin{romanlist}
\item For $e=0$, the derivative of the function $V$ along trajectories of system \eqref{evm:EQ5} is $\dot{V}=3z^{2} \left(x^{2} +y^{2} +z^{2} \right)\left(1+\alpha _{1} (t)+\alpha _{2} (t)\right)$. Since $\alpha _{1} (t)+\alpha _{2} (t)\ge l>-1$ $\forall t\ge 0$, then $\forall t\ge 0$ we have $\dot{V}\ge 3z^{2} \left(x^{2} +y^{2} +z^{2} \right)\left(1+l\right)$, where $l>-1$. Considering that $3z^{2} \left(x^{2} +y^{2} +z^{2} \right)>0$ $\forall (x,y,z)\ne (0,0,0)$, then $\dot{V}$ is positive definite function. Then, by Theorem 4.7.1 \cite{evm:Liao2007} (taking into account Corollary 4.7.3 \cite{evm:Liao2007} and its proof), the solution $x=y=z=0$ of system \eqref{evm:EQ5} is unstable.

\item For $a=0$, the derivative of the function $V$ along trajectories of system \eqref{evm:EQ6} is $\dot{V}=3z^{2} \left(x^{2} +y^{2} +z^{2} \right)\left(1+\alpha _{1} (t)+\alpha _{2} (t)\right)$. Repeating the reasoning from item (i), we find that the solution $x=y=z=0$ of system \eqref{evm:EQ6} is unstable.

\item For $a=0$, the derivative of the function $V$ along trajectories of system \eqref{evm:EQ7} is $\dot{V}=3z^{2} \left(x^{2} +y^{2} +z^{2} \right)\left(1+\alpha _{1} (t)\right)$. Since $\alpha _{1} (t)\ge l>-1$ $\forall t\ge 0$, then $\forall t\ge 0$ we have $\dot{V}\ge 3z^{2} \left(x^{2} +y^{2} +z^{2} \right)\left(1+l\right)$, where $l>-1$. Further, repeating the reasoning from item (i), we find that the solution $x=y=z=0$ of system \eqref{evm:EQ7} is unstable.
\end{romanlist}
\end{proof}

\section{Periodic solution}
\noindent
By Theorem 9 \cite{evm:Yang2018}, for $d=a$, \textit{$c=-b\ne 0$ }and $a(a+e)<0$, system \eqref{evm:EQ2} has a $2\pi /\left|b\right|$-periodic solution
\begin{eqnarray}
 x(t) & = & \sqrt{-a(a+e)} \sin \left(bt\right), \nonumber\\
 y(t) & = & \sqrt{-a(a+e)} \cos \left(bt\right),\label{evm:EQ8} \\
 z(t) & = & -a \nonumber
\end{eqnarray}
corresponding to the cycle $x^{2} +y^{2} =-a(a+e)$, $z=-a$. Moreover, this solution is asymptotically stable for $2a+e<0$ and unstable for $2a+e>0$. Similar statements are valid for systems \eqref{evm:EQ5} and \eqref{evm:EQ6}.

\begin{lemma}
Let $\alpha _{i} (t)$ ($i=\overline{1,4}$) be scalar continuous functions (not necessarily odd).
\begin{romanlist}
\item  If $a(a+e)<0$, then system \eqref{evm:EQ5} has a solution
\begin{eqnarray}
 x(t) & = &\sqrt{-a\left(a+e\right)} \sin \left(bt+\int\limits_{0}^{t} \left(b\alpha _{1} (s)+\alpha _{3} (s)\right){\rm d}s\right),\nonumber \\
  y(t) & = & \sqrt{-a\left(a+e\right)} \cos \left(bt+\int\limits_{0}^{t} \left(b\alpha _{1} (s)+\alpha _{3} (s)\right){\rm d}s\right),\label{evm:EQ9} \\
  z(t) & = & -a\nonumber
\end{eqnarray}
corresponding to the cycle  $x^{2} +y^{2} =-a(a+e)$, $z=-a$.

\item  System \eqref{evm:EQ6} has a solution
\begin{eqnarray}
  x(t) & = & a\sin \left(bt+\int\limits_{0}^{t} \left(b\alpha _{1} (s)+\alpha _{3} (s)+a^{4} \alpha _{4} (s)\right){\rm d}s\right),\nonumber \\
  y(t) & = & a\cos \left(bt+\int\limits_{0}^{t} \left(b\alpha _{1} (s)+\alpha _{3} (s)+a^{4} \alpha _{4} (s)\right){\rm d}s\right),\label{evm:EQ10} \\
  z(t) & = & -a \nonumber
\end{eqnarray}
corresponding to the cycle  $x^{2} +y^{2} =a^{2} $, $z=-a$.
\end{romanlist}
\end{lemma}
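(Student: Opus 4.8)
The plan is to verify both assertions by direct substitution, exploiting the fact that each proposed orbit lies on the invariant set $z\equiv -a$ with a fixed radius, which collapses the three-dimensional system to a single scalar phase equation. I would begin by introducing the abbreviation $\theta(t)=bt+\int_0^t\bigl(b\alpha_1(s)+\alpha_3(s)\bigr)\,ds$ for part (i) (and the analogous expression with the extra $a^4\alpha_4$ term for part (ii)), so that the candidate reads $x=R\sin\theta$, $y=R\cos\theta$, $z=-a$ with $R=\sqrt{-a(a+e)}$ (respectively $R=|a|$). Differentiating gives $\dot x=R\dot\theta\cos\theta$ and $\dot y=-R\dot\theta\sin\theta$, where $\dot\theta=b(1+\alpha_1(t))+\alpha_3(t)$ (plus $a^4\alpha_4(t)$ in the second case).

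The key simplification to record first is that on $z=-a$ one has $a+z=0$, so every term carrying the factor $\alpha_2(t)$ vanishes, and moreover $ax+xz=ax-ax=0$ and $ay+yz=0$. Consequently the drift in the first equation reduces to $ax+by+xz=by=bR\cos\theta$ and in the second to $-bx+ay+yz=-bx=-bR\sin\theta$. Substituting into the $\dot x$ and $\dot y$ equations of \eqref{evm:EQ5} and collecting terms, the right-hand sides become $R\cos\theta\,[b(1+\alpha_1)+\alpha_3]$ and $-R\sin\theta\,[b(1+\alpha_1)+\alpha_3]$, which match $\dot x$ and $\dot y$ exactly. For the $z$-component, since $z\equiv-a$ is constant I need $\dot z=0$, so it suffices to check that the right-hand side of the $z$-equation vanishes on the orbit: using $x^2+y^2=R^2=-a(a+e)$ and $z^2=a^2$ I would compute $ez-x^2-y^2-z^2=-ae+a(a+e)-a^2=0$, independently of the factor $1+\alpha_1+\alpha_2$. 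This is precisely the statement that the chosen radius places the orbit on the invariant circle $x^2+y^2=-a(a+e)$, $z=-a$, completing part (i).

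Part (ii) proceeds identically, the only new ingredient being the $\alpha_4$ terms in \eqref{evm:EQ6}. On the orbit (where $e=-2a$ and $R=|a|$) I would evaluate the common factor $(x^2+y^2)(4az+x^2+y^2+2z^2)=a^2(-4a^2+a^2+2a^2)=-a^4$, a constant; the $\alpha_4$ contributions in the $\dot x$ and $\dot y$ equations therefore reduce to $a^4y\,\alpha_4=a^4R\cos\theta\,\alpha_4$ and $-a^4x\,\alpha_4=-a^4R\sin\theta\,\alpha_4$, which is exactly what augments the phase velocity to $\dot\theta=b(1+\alpha_1)+\alpha_3+a^4\alpha_4$. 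The $z$-equation again vanishes because $2az+x^2+y^2+z^2=-2a^2+2a^2=0$.

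I do not anticipate a genuine obstacle here: the whole argument is a guided verification, and the only point requiring care is the bookkeeping of which perturbation terms survive on the invariant set. Recognising in advance that $a+z=0$ annihilates the $\alpha_2$ terms, and that the bracket multiplying $\alpha_4$ evaluates to the constant $-a^4$ on the cycle, is what makes the computation transparent rather than tedious. Since the right-hand sides are polynomial in $(x,y,z)$ with continuous coefficients in $t$, they are locally Lipschitz in the spatial variables, so the verified functions are genuine solutions of the respective initial value problems.
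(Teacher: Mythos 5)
Your proposal is correct and follows exactly the route the paper takes: the authors prove the lemma "by direct substitution of \eqref{evm:EQ9} into system \eqref{evm:EQ5} and \eqref{evm:EQ10} into system \eqref{evm:EQ6}," which is precisely the verification you carry out (and you supply the details — the vanishing of the $\alpha_2$ terms via $a+z=0$, the reduction of the $\alpha_4$ bracket to the constant $-a^4$, and the vanishing of the $z$-equation on the cycle — that the paper omits).
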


\begin{proof}
 The assertions of the lemma are proved by direct substitution of \eqref{evm:EQ9} into system \eqref{evm:EQ5} and \eqref{evm:EQ10} into system \eqref{evm:EQ6}.
\end{proof}

\begin{theorem}
 Let $\alpha _{i} (t)$ ($i=\overline{1,4}$) be scalar twice continuously differentiable odd functions, $b\ne 0$ and the right-hand sides of systems \eqref{evm:EQ5} and \eqref{evm:EQ6} be $2\pi /\left|b\right|$-periodic with respect to time $t$.
\begin{romanlist}
\item If $a(a+e)<0$ and $\exists k\in {\mathbb Z}$ such that $\int\limits_{0}^{-2\pi /\left|b\right|} \left(b\alpha _{1} (s)+\alpha _{3} (s)\right){\rm d}s=2\pi k$, then solution \eqref{evm:EQ9} of system \eqref{evm:EQ5} is $2\pi /\left|b\right|$-periodic and asymptotically stable for $2a+e<0$ and unstable for $2a+e>0$.

\item  If $\exists k\in {\mathbb Z}$ such that $\int\limits_{0}^{-2\pi /\left|b\right|} \left(b\alpha _{1} (s)+\alpha _{3} (s)+a^{4} \alpha _{4} (s)\right){\rm d}s=2\pi k$, then solution \eqref{evm:EQ10} of system \eqref{evm:EQ6} is $2\pi /\left|b\right|$-periodic.
\end{romanlist}
\end{theorem}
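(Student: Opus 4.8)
The plan is to pass to cylindrical coordinates adapted to the cycle, which at once explains the periodicity of \eqref{evm:EQ9} and controls its stability. Put $r^{2}=x^{2}+y^{2}$ and let $\phi$ be the polar angle in the $(x,y)$-plane. Computing $x\dot x+y\dot y$ and $x\dot y-y\dot x$ for system \eqref{evm:EQ5} (with $c=-b$, $d=a$), the $\alpha_{3}$-terms cancel in $\dot r$ and in $\dot z$, so the radial--vertical part decouples from the angle:
\[\dot r=(a+z)\,r\,\beta(t),\qquad \dot z=\bigl(ez-r^{2}-z^{2}\bigr)\beta(t),\qquad \dot\phi=-\bigl(b+b\alpha_{1}(t)+\alpha_{3}(t)\bigr),\]
where $\beta(t)=1+\alpha_{1}(t)+\alpha_{2}(t)$, and crucially $\dot\phi$ depends on neither $r$, $z$ nor $\phi$. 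The $(r,z)$-subsystem is precisely the planar reduction of the unperturbed autonomous system, merely multiplied by the scalar $\beta(t)$, and its equilibrium is $(R,-a)$ with $R=\sqrt{-a(a+e)}$. Since the preceding lemma already exhibits \eqref{evm:EQ9} as the solution sitting on this equilibrium with rotating phase, only periodicity and stability remain.

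Because $r\equiv R$ and $z\equiv-a$ are constant, \eqref{evm:EQ9} is $T$-periodic (with $T:=2\pi/|b|$) exactly when the phase advances by a multiple of $2\pi$ over one period. Its increment is $\Psi(t+T)-\Psi(t)=bT+\int_{t}^{t+T}\bigl(b\alpha_{1}(s)+\alpha_{3}(s)\bigr)\,\mathrm{d}s$; periodicity of the right-hand side of \eqref{evm:EQ5} makes $\alpha_{1},\alpha_{3}$ $T$-periodic, so the integral reduces to $\int_{0}^{T}(b\alpha_{1}+\alpha_{3})\,\mathrm{d}s$, independent of $t$, while $bT=2\pi\,\mathrm{sgn}\,b$. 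Since $T$-periodicity gives $\int_{0}^{-T}=-\int_{0}^{T}$, the hypothesis $\int_{0}^{-T}(b\alpha_{1}+\alpha_{3})\,\mathrm{d}s=2\pi k$ is equivalent to $\int_{0}^{T}(b\alpha_{1}+\alpha_{3})\,\mathrm{d}s\in2\pi\mathbb{Z}$, whence the increment lies in $2\pi\mathbb{Z}$ and \eqref{evm:EQ9} is $T$-periodic. (Oddness together with $T$-periodicity in fact forces $\int_{0}^{T}\alpha_{i}\,\mathrm{d}s=0$, so the condition automatically holds with $k=0$.) Assertion (ii) for \eqref{evm:EQ6} is identical, the phase now carrying the extra $a^{4}\alpha_{4}$ coming from the $\alpha_{4}$-contribution to $\dot\phi$ evaluated on $r=|a|$, $z=-a$; the $\alpha_{4}$-terms again cancel in $\dot r$ and $\dot z$.

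For the stability in (i), I would linearize the decoupled $(r,z)$-subsystem at $(R,-a)$. The Jacobian of $\bigl((a+z)r,\ ez-r^{2}-z^{2}\bigr)$ there is $J=\left(\begin{array}{cc}0&R\\-2R&2a+e\end{array}\right)$, with $\mathrm{tr}\,J=2a+e$ and $\det J=2R^{2}=-2a(a+e)>0$. The transverse variational equation is $\dot\xi=\beta(t)\,J\,\xi$; as $\beta(t)J$ is a scalar multiple of the constant matrix $J$, its fundamental matrix is $\exp\!\bigl(J\int_{0}^{t}\beta\bigr)$ and the monodromy over $[0,T]$ is $\exp(JT)$, using $\int_{0}^{T}\beta=T+\int_{0}^{T}(\alpha_{1}+\alpha_{2})=T$. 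The transverse multipliers are thus $e^{\lambda_{i}T}$ with $\lambda_{i}$ the eigenvalues of $J$, while $\delta\phi$, tangent to the orbit, carries the neutral multiplier $1$. Hence the cycle is orbitally asymptotically stable iff both $\mathrm{Re}\,\lambda_{i}<0$, i.e. $\mathrm{tr}\,J<0$, i.e. $2a+e<0$, and unstable iff some $\mathrm{Re}\,\lambda_{i}>0$, i.e. (given $\det J>0$) $2a+e>0$ --- the same dichotomy as for \eqref{evm:EQ8} in \citet{evm:Yang2018}. Equivalently, one may argue abstractly: by Theorem 2 systems \eqref{evm:EQ5} and \eqref{evm:EQ2} share the MRF and, both being $T$-periodic, the same period map, so \eqref{evm:EQ9} and \eqref{evm:EQ8} are the same fixed point of that map and inherit the same stability.

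The steps needing the most care are, first, the factorization of the transverse monodromy into $\exp(JT)$ via $\int_{0}^{T}\beta=T$ --- this is where $b\ne0$, $T$-periodicity and oddness are jointly used, and it removes any need to assume $\beta>0$ --- and, second, the bookkeeping that the asserted stability is orbital, with the genuinely neutral tangential direction separated from the transverse ones. This also explains why (ii) claims only periodicity: for \eqref{evm:EQ6} one has $e=-2a$, so $\mathrm{tr}\,J=2a+e=0$ and $\lambda_{1,2}=\pm i\sqrt{2}\,|a|$ lie on the unit circle, the borderline center case in which the first-order analysis is inconclusive and Yang's dichotomy gives nothing.
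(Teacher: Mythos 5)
Your proof is correct, but it takes a genuinely different route from the paper's. The paper argues entirely through the reflecting-function machinery: by Theorem 2, system \eqref{evm:EQ5} shares its MRF with the autonomous system \eqref{evm:EQ2} (for $c=-b$, $d=a$), hence the two $2\pi/|b|$-periodic systems share the mapping over the period $[-\pi/|b|,\pi/|b|]$; the integral hypothesis is used only to check that solution \eqref{evm:EQ9} and Yang's solution \eqref{evm:EQ8} pass through the same point at $t=-\pi/|b|$, after which periodicity and the stability dichotomy are imported wholesale from Theorem 9 of \citet{evm:Yang2018} and Theorem 5 of \citet{evm:MironenkoV2004}. Your cylindrical-coordinate reduction is instead self-contained: the decoupling $\dot r=(a+z)r\beta(t)$, $\dot z=(ez-r^{2}-z^{2})\beta(t)$, $\dot\phi=-(b+b\alpha_{1}+\alpha_{3})$ (plus the extra $-r^{2}(4az+r^{2}+2z^{2})\alpha_{4}$ term for \eqref{evm:EQ6}, which equals $-a^{4}\alpha_{4}$ on the cycle) is computed correctly; the monodromy factorization $\exp\left(J\int_{0}^{T}\beta\right)=\exp(JT)$ is legitimate because $\beta(t)J$ commutes with itself at different times and $\int_{0}^{T}(\alpha_{1}+\alpha_{2})=0$ for odd $T$-periodic functions; and the trace--determinant analysis of $J$ reproduces Yang's dichotomy without citing it. What each buys: the paper's argument is two lines and showcases the MRF transfer principle that is the point of the article, but it leans on two external theorems and leaves the precise sense of ``asymptotically stable'' implicit; your argument makes explicit that the tangential multiplier is exactly $1$ (a pure phase shift never decays, so only orbital asymptotic stability can hold, which is also all that Yang's autonomous result can mean), shows the integral hypothesis is automatically satisfied with $k=0$ under the stated oddness and periodicity assumptions, and explains structurally why part (ii) cannot assert stability ($e=-2a$ forces $\mathrm{tr}\,J=0$). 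One small imprecision: $T$-periodicity of the right-hand sides forces only the combinations $\alpha_{1}+\alpha_{2}$ and $b\alpha_{1}+\alpha_{3}$ (and the $\alpha_{4}$-coefficient) to be $T$-periodic, not $\alpha_{1}$ and $\alpha_{3}$ separately; but these combinations are exactly what your integrals involve, so nothing breaks.
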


\begin{proof}
\begin{romanlist}
\item It follows from Theorem 2 that the MRF of system \eqref{evm:EQ5} coincides with the MRF of system \eqref{evm:EQ2} for $c=-b$ and $d=a$. By Theorem 9 \cite{evm:Yang2018}, for $d=a$, $c=-b\ne 0$ and $a(a+e)<0$, system \eqref{evm:EQ2} has a $2\pi /\left|b\right|$-periodic solution \eqref{evm:EQ8}, which is asymptotically stable for $2a+e<0$ and unstable for $2a+e>0$. By Lemma 1, system \eqref{evm:EQ5} has a solution \eqref{evm:EQ9}. Let $\Bar{\gamma}(t)=\left(x(t),y(t),z(t)\right)$ denote solution \eqref{evm:EQ8} of system \eqref{evm:EQ2} and $\bar{\chi }(t)=\left(x(t),y(t),z(t)\right)$ denote solution \eqref{evm:EQ9} of system \eqref{evm:EQ5}. If $\exists k\in {\mathbb Z}$ such that $\int\limits_{0}^{-2\pi /\left|b\right|} \left(b\alpha _{1} (s)+\alpha _{3} (s)\right){\rm d}s=2\pi k$, then $\bar{\chi }\left(-\pi /\left|b\right|\right)=\Bar{\gamma}\left(-\pi /\left|b\right|\right)$ and the statement of the theorem immediately follows from Theorem 5 \cite{evm:MironenkoV2004}.

\item It follows from Theorem 2 that the MRF of system \eqref{evm:EQ6} coincides with the MRF of system \eqref{evm:EQ2} for $c=-b$, $d=a$ and \textit{$e=-2a$}. By Theorem 9 \cite{evm:Yang2018}, for $d=a$, $c=-b\ne 0$ and $a(a+e)<0$, system \eqref{evm:EQ2} has a $2\pi /\left|b\right|$-periodic solution \eqref{evm:EQ8}. By Lemma 1, system \eqref{evm:EQ6} has a solution \eqref{evm:EQ10}. Let $\Bar{\gamma}(t)=\left(x(t),y(t),z(t)\right)$ denote solution \eqref{evm:EQ8} of system \eqref{evm:EQ2} and $\tilde{\chi }(t)=\left(x(t),y(t),z(t)\right)$ denote solution \eqref{evm:EQ10} of system \eqref{evm:EQ6}. If $\exists k\in {\mathbb Z}$ such that $\int\limits_{0}^{-2\pi /\left|b\right|} \left(b\alpha _{1} (s)+\alpha _{3} (s)+a^{4} \alpha _{4} (s)\right){\rm d}s=2\pi k$, then $\tilde{\chi }\left(-\pi /\left|b\right|\right)=\Bar{\gamma}\left(-\pi /\left|b\right|\right)$ and the statement of the theorem immediately follows from Theorem 5 \cite{evm:MironenkoV2004}.
\end{romanlist}
\end{proof}

\begin{theorem}
Let $\alpha _{i} (t)$ ($i=\overline{1,4}$) be scalar continuous functions (not necessarily odd) and $b\ne 0$.
\begin{romanlist}
\item  Let the function $b\alpha _{1} (t)+\alpha _{3} (t)$  be $2\pi /\left|b\right|$-periodic, $a(a+e)<0$, and $\int\limits_{0}^{2\pi /b} \left(b\alpha _{1} (s)+\alpha _{3} (s)\right){\rm d}s=0$, then solution \eqref{evm:EQ9} of system \eqref{evm:EQ5} is $2\pi /\left|b\right|$-periodic (the period is not necessarily minimal).

\item Let the function $b\alpha _{1} (t)+\alpha _{3} (t)+a^{4} \alpha _{4} (t)$  be $2\pi /\left|b\right|$-periodic and $\int\limits_{0}^{2\pi /b} \left(b\alpha _{1} (s)+\alpha _{3} (s)+a^{4} \alpha _{4} (s)\right){\rm d}s=0$,  then solution \eqref{evm:EQ10} of system \eqref{evm:EQ6} is $2\pi /\left|b\right|$-periodic (the period is not necessarily minimal).
\end{romanlist}
\end{theorem}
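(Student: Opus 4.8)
The plan is to prove both parts by a direct computation on the explicit solutions, rather than through the reflecting-function correspondence used in Theorem 4: here the $\alpha_i$ need not be odd, so Theorem 1 does not apply and the MRF need not be preserved. Lemma 1 already guarantees that \eqref{evm:EQ9} solves \eqref{evm:EQ5} when $a(a+e)<0$ and that \eqref{evm:EQ10} solves \eqref{evm:EQ6}, so the only thing left to establish is that these solutions are $2\pi/\left|b\right|$-periodic. Write $T=2\pi/\left|b\right|$, and let $g$ denote $b\alpha_1+\alpha_3$ in part (i) and $b\alpha_1+\alpha_3+a^4\alpha_4$ in part (ii); in each case $g$ is, by hypothesis, $T$-periodic.

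First I would observe that in both solutions the third coordinate $z\equiv -a$ and the amplitude in front of the trigonometric factors are constants, so the periodicity of the whole solution is equivalent to the periodicity of the two first coordinates, which share the common phase
\[\Phi(t)=bt+\int_0^t g(s)\,{\rm d}s.\]
Since $\sin$ and $\cos$ are $2\pi$-periodic, it suffices to show that $\Phi(t+T)-\Phi(t)\in 2\pi{\mathbb Z}$ for \emph{every} $t$.

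Next I would compute
\[\Phi(t+T)-\Phi(t)=bT+\int_t^{t+T} g(s)\,{\rm d}s=2\pi\frac{b}{\left|b\right|}+\int_t^{t+T} g(s)\,{\rm d}s,\]
using $bT=2\pi b/\left|b\right|$ with $b/\left|b\right|=\pm1$. The decisive point, and the place where the $T$-periodicity of $g$ enters, is that $\int_t^{t+T} g\,{\rm d}s$ does not depend on $t$: its $t$-derivative equals $g(t+T)-g(t)=0$, so $\int_t^{t+T} g\,{\rm d}s=\int_0^T g\,{\rm d}s$ for all $t$, and the increment of $\Phi$ over a period is a constant.

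Finally I would convert the hypothesis $\int_0^{2\pi/b} g\,{\rm d}s=0$ into $\int_0^T g\,{\rm d}s=0$. If $b>0$ then $2\pi/b=T$ and the two integrals coincide; if $b<0$ then $2\pi/b=-T$, and $0=\int_0^{-T} g\,{\rm d}s=-\int_{-T}^0 g\,{\rm d}s=-\int_0^T g\,{\rm d}s$ by $T$-periodicity of $g$, so again $\int_0^T g\,{\rm d}s=0$. Combining this with the previous step gives $\Phi(t+T)-\Phi(t)=2\pi b/\left|b\right|\in 2\pi{\mathbb Z}$ for every $t$, whence \eqref{evm:EQ9} (respectively \eqref{evm:EQ10}) is $2\pi/\left|b\right|$-periodic, though possibly not with minimal period. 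I expect the only genuine obstacle to be bookkeeping: keeping the sign of $b$ straight so as to distinguish the integration limit $2\pi/b$ from the period $2\pi/\left|b\right|$, and recognizing that the periodicity assumption on $g$ is exactly what makes the phase increment the same constant for every $t$ — without it one could match the phase only at a single instant, which is not enough for periodicity of the solution.
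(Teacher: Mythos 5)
Your proposal is correct and takes essentially the same route as the paper: reduce periodicity of the solution to showing the phase increment over one period lies in $2\pi{\mathbb Z}$, prove that $\int_t^{t+P}g(s)\,{\rm d}s$ is independent of $t$ by differentiating it and using the periodicity of $g$, and then invoke the hypothesis that the integral over one period vanishes. The only (welcome) difference is that you handle the sign of $b$ explicitly when passing between the integration limit $2\pi/b$ and the period $2\pi/\left|b\right|$, a point the paper's proof leaves implicit.
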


\begin{proof}
  To prove the first assertion, it suffices to prove that $\int\limits_{0}^{t+2\pi /b} \left(b\alpha _{1} (s)+\alpha _{3} (s)\right){\rm d}s\equiv \int\limits_{0}^{t} \left(b\alpha _{1} (s)+\alpha _{3} (s)\right){\rm d}s$. Taking into account that $\int\limits_{0}^{t+2\pi /b} \left(b\alpha _{1} (s)+\alpha _{3} (s)\right){\rm d}s\equiv \int\limits_{0}^{t} \left(b\alpha _{1} (s)+\alpha _{3} (s)\right){\rm d}s+\int\limits_{t}^{t+2\pi /b} \left(b\alpha _{1} (s)+\alpha _{3} (s)\right){\rm d}s$, it remains to prove that $\int\limits_{t}^{t+2\pi /b} \left(b\alpha _{1} (s)+\alpha _{3} (s)\right){\rm d}s\equiv 0$. Let us introduce the notation $A(t)=\int\limits_{t}^{t+2\pi /b} \left(b\alpha _{1} (s)+\alpha _{3} (s)\right){\rm d}s$. Since $b\alpha _{1} (t)+\alpha _{3} (t)$ is a continuous function, by the properties of an integral with a variable upper limit, $A(t)$ is a differentiable function and $\dot{A}(t)\equiv b\alpha _{1} (t+2\pi /b)+\alpha _{3} (t+2\pi /b)-\left(b\alpha _{1} (t)+\alpha _{3} (t)\right)$. Since the function $b\alpha _{1} (t)+\alpha _{3} (t)$ is \textit{$2\pi /\left|b\right|$}-periodic, it follows that $\dot{A}(t)\equiv 0$, that is, $A(t)\equiv {\rm const}$.
  In particular, $A(t)\equiv A(0)$, i.e.
\begin{equation} \label{evm:EQ11}
\int\limits_{t}^{t+2\pi /b} \left(b\alpha _{1} (s)+\alpha _{3} (s)\right){\rm d}s\equiv \int\limits_{0}^{2\pi /b} \left(b\alpha _{1} (s)+\alpha _{3} (s)\right){\rm d}s.
\end{equation}
By the hypothesis of the theorem, $\int\limits_{0}^{2\pi /b} \left(b\alpha _{1} (s)+\alpha _{3} (s)\right){\rm d}s=0$, which completes the proof of the first statement.

The second assertion of the theorem is proved similarly to the first.
\end{proof}

\begin{proposition}
In the formulation of Theorem 5:
\begin{romanlist}
\item  the condition $\int\limits_{0}^{2\pi /b} \left(b\alpha _{1} (s)+\alpha _{3} (s)\right){\rm d}s=0$ can be replaced by the condition that the function $b\alpha _{1} (t)+\alpha _{3} (t)$ is odd;

\item the condition $\int\limits_{0}^{2\pi /b} \left(b\alpha _{1} (s)+\alpha _{3} (s)+a^{4} \alpha _{4} (s)\right){\rm d}s=0$ can be replaced by the condition that the function $b\alpha _{1} (t)+\alpha _{3} (t)+a^{4} \alpha _{4} (t)$ is odd.
\end{romanlist}
\end{proposition}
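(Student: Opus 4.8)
The plan is to reduce the proposition to Theorem 5 itself: I will show that the proposed oddness condition, together with the periodicity hypothesis that Theorem 5 keeps unchanged, already forces the original integral-zero condition, after which the stated periodicity of the solution follows verbatim from Theorem 5. So the whole task is to verify one elementary fact about odd periodic functions.

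First I would fix notation and collect the hypotheses that remain available. For the first assertion set $f(t):=b\alpha_1(t)+\alpha_3(t)$, and write $T:=2\pi/|b|$ for the period. Under the formulation of Theorem 5 with the replacement, $f$ is continuous, it is $T$-periodic (this hypothesis is retained), and it is odd (the new condition); for this assertion the hypothesis $a(a+e)<0$ is likewise retained and is needed only to invoke Theorem 5 at the end. The two elementary facts I would then use are: (a) for a continuous $T$-periodic function the integral over any interval of length $T$ is independent of the left endpoint, so in particular $\int_0^T f=\int_{-T/2}^{T/2} f$; and (b) the integral of an odd function over the symmetric interval $[-T/2,T/2]$ vanishes. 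Combining (a) and (b) gives $\int_0^T f(s)\,ds=0$.

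The only step requiring care is the mismatch between the period $2\pi/|b|$ and the upper limit $2\pi/b$ appearing in Theorem 5. When $b>0$ these agree and the computation above is already the integral condition. When $b<0$ one has $2\pi/b=-T$, so $\int_0^{2\pi/b} f=-\int_{-T}^0 f$; but $\int_{-T}^0 f$ is again a full-period integral, hence equals $\int_{-T/2}^{T/2} f=0$ by (a) and (b), and the condition holds once more. In either case $\int_0^{2\pi/b}\bigl(b\alpha_1(s)+\alpha_3(s)\bigr)\,ds=0$, so the first assertion of Theorem 5 applies and yields the $2\pi/|b|$-periodicity of solution \eqref{evm:EQ9}. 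The second assertion is proved identically with $g(t):=b\alpha_1(t)+\alpha_3(t)+a^4\alpha_4(t)$ in place of $f$. I expect this sign bookkeeping to be the only mild obstacle; the remainder is just the standard interplay between periodicity (all full-period integrals coincide) and oddness (the symmetric one is zero), and it is precisely periodicity that is indispensable here, since oddness alone would not make the integral over $[0,T]$ vanish.
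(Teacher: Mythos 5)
Your proof is correct and rests on exactly the same two ingredients as the paper's: the translation-invariance of the integral of a continuous periodic function over any full period (the paper's identity \eqref{evm:EQ11}) combined with oddness. The only cosmetic difference is that you evaluate the full-period integral over the symmetric interval $[-T/2,T/2]$ where oddness kills it directly, whereas the paper shifts to $[-2\pi/b,0]$ and uses oddness to conclude $-I=I$; your explicit treatment of the sign of $b$ is a small bonus the paper leaves implicit.
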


\begin{proof}
 It follows from identity \eqref{evm:EQ11} for $t=-2\pi /b$ that $\int\limits_{-2\pi /b}^{0} \left(b\alpha _{1} (s)+\alpha _{3} (s)\right){\rm d}s\equiv \int\limits_{0}^{2\pi /b} \left(b\alpha _{1} (s)+\alpha _{3} (s)\right){\rm d}s$. And since the function $b\alpha _{1} (t)+\alpha _{3} (t)$ is odd, it follows that $-\int\limits_{-2\pi /b}^{0} \left(b\alpha _{1} (s)+\alpha _{3} (s)\right){\rm d}s\equiv \int\limits_{0}^{2\pi /b} \left(b\alpha _{1} (s)+\alpha _{3} (s)\right){\rm d}s$. Therefore, we have $\int\limits_{0}^{2\pi /b} \left(b\alpha _{1} (s)+\alpha _{3} (s)\right){\rm d}s=0$.

The second statement is proved similarly to the first.
\end{proof}

\begin{theorem}
  Let $\alpha _{i} (t)$ ($i=\overline{1,4}$) be scalar continuous functions (not necessarily odd) and $b=0$.
\begin{romanlist}
\item Let the function $\alpha _{3} (t)$ be $\omega $-periodic, $a(a+e)<0$ and $\exists k\in {\mathbb Z}$ such that $\int\limits_{0}^{\omega } \alpha _{3} (s){\rm d}s=2\pi k$, then solution \eqref{evm:EQ9} of system \eqref{evm:EQ5} is $\omega $-periodic (the period is not necessarily minimal).

\item Let the function $\alpha _{3} (t)+a^{4} \alpha _{4} (t)$ be $\omega $-periodic and $\exists k\in {\mathbb Z}$ such that $\int\limits_{0}^{\omega } \left(\alpha _{3} (s)+a^{4} \alpha _{4} (s)\right){\rm d}s=2\pi k$, then solution \eqref{evm:EQ10} of system \eqref{evm:EQ6} is $\omega $-periodic (the period is not necessarily minimal).
\end{romanlist}
\end{theorem}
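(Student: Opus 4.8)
The plan is to exploit the degenerate structure of solution \eqref{evm:EQ9} at $b=0$, where the phase of the trigonometric factors is governed entirely by the antiderivative $\Phi(t):=\int\limits_{0}^{t}\alpha_{3}(s)\,{\rm d}s$. Setting $b=0$ in \eqref{evm:EQ9} yields $x(t)=\sqrt{-a(a+e)}\,\sin\Phi(t)$, $y(t)=\sqrt{-a(a+e)}\,\cos\Phi(t)$, $z(t)\equiv-a$. The constant component $z$ is trivially $\omega$-periodic, so the whole solution is $\omega$-periodic exactly when $\sin\Phi(t+\omega)=\sin\Phi(t)$ and $\cos\Phi(t+\omega)=\cos\Phi(t)$ for all $t$, that is, when the phase increment $\Phi(t+\omega)-\Phi(t)$ is a fixed integer multiple of $2\pi$ that does not depend on $t$.

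The key step is therefore to control the increment $A(t):=\Phi(t+\omega)-\Phi(t)=\int\limits_{t}^{t+\omega}\alpha_{3}(s)\,{\rm d}s$, and I would do so exactly as in the proof of Theorem 5. Since $\alpha_{3}$ is continuous, $A$ is differentiable with $\dot{A}(t)=\alpha_{3}(t+\omega)-\alpha_{3}(t)$, and the $\omega$-periodicity of $\alpha_{3}$ forces $\dot{A}(t)\equiv0$; hence $A(t)\equiv A(0)=\int\limits_{0}^{\omega}\alpha_{3}(s)\,{\rm d}s$. Invoking the hypothesis $\int\limits_{0}^{\omega}\alpha_{3}(s)\,{\rm d}s=2\pi k$, the increment equals the constant $2\pi k$, so $\sin\Phi(t+\omega)=\sin(\Phi(t)+2\pi k)=\sin\Phi(t)$ and likewise for the cosine. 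This gives $x(t+\omega)=x(t)$, $y(t+\omega)=y(t)$, $z(t+\omega)=z(t)$, i.e. solution \eqref{evm:EQ9} is $\omega$-periodic.

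For the second assertion I would repeat the argument verbatim, reading off from \eqref{evm:EQ10} at $b=0$ the phase function $\int\limits_{0}^{t}\bigl(\alpha_{3}(s)+a^{4}\alpha_{4}(s)\bigr)\,{\rm d}s$ and using the hypotheses that $\alpha_{3}+a^{4}\alpha_{4}$ is $\omega$-periodic and that $\int\limits_{0}^{\omega}\bigl(\alpha_{3}(s)+a^{4}\alpha_{4}(s)\bigr)\,{\rm d}s=2\pi k$; the constancy of the corresponding increment and its being a multiple of $2\pi$ again deliver periodicity of the $x$ and $y$ components, while $z\equiv-a$ is constant.

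I do not expect a genuine obstacle: the statement is a routine $b\to0$ degeneration of Theorem 5. The only point requiring a little care is that the admissible phase increment is now any integer multiple $2\pi k$ of a full turn rather than being forced to vanish, which is precisely what allows $\omega$ to serve as a (not necessarily minimal) period even though $\Phi$ itself need not be bounded.
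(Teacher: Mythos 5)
Your proposal is correct and follows essentially the same route as the paper: reduce periodicity of \eqref{evm:EQ9} at $b=0$ to showing the phase increment $\int_{t}^{t+\omega}\alpha_{3}(s)\,{\rm d}s$ is the constant $2\pi k$, proved by differentiating the integral with a moving window and using the $\omega$-periodicity of $\alpha_{3}$. The second assertion is handled the same way in both.
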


\begin{proof}
   For \textit{$b=0$}, solution \eqref{evm:EQ9} of system \eqref{evm:EQ5} takes the form $x(t)=\sqrt{-a\left(a+e\right)} \sin \left(\int\limits_{0}^{t} \alpha _{3} (s){\rm d}s\right)$, $y(t)=\sqrt{-a\left(a+e\right)} \cos \left(\int\limits_{0}^{t} \alpha _{3} (s){\rm d}s\right)$, $z(t)=-a$, and to prove the first assertion of the theorem, it suffices to prove that $\exists k\in {\mathbb Z}$ such that $\int\limits_{0}^{t+\omega } \alpha _{3} (s){\rm d}s\equiv \int\limits_{0}^{t} \alpha _{3} (s){\rm d}s+2\pi k$. Taking into account that $\int\limits_{0}^{t+\omega } \alpha _{3} (s){\rm d}s\equiv \int\limits_{0}^{t} \alpha _{3} (s){\rm d}s+\int\limits_{t}^{t+\omega } \alpha _{3} (s){\rm d}s$, it remains to prove that $\exists k\in {\mathbb Z}$ such that $\int\limits_{t}^{t+\omega } \alpha _{3} (s){\rm d}s\equiv 2\pi k$. Let us introduce the notation $B(t)=\int\limits_{t}^{t+\omega } \alpha _{3} (s){\rm d}s$. Since $\alpha _{3} (t)$ is a continuous function, by the properties of an integral with a variable upper limit, $B(t)$ is a differentiable function and $\dot{B}(t)\equiv \alpha _{3} (t+\omega )-\alpha _{3} (t)$. Since the function $\alpha _{3} (t)$ is \textit{$\omega $}-periodic, it follows that $\dot{B}(t)\equiv 0$, that is, $B(t)\equiv {\rm const}$. In particular, $B(t)\equiv B(0)$, i.e.
\begin{equation} \label{evm:EQ12}
\int\limits_{t}^{t+\omega } \alpha _{3} (s){\rm d}s\equiv \int\limits_{0}^{\omega } \alpha _{3} (s){\rm d}s.
\end{equation}
It remains to note that, by the hypothesis of the theorem, $\exists k\in {\mathbb Z}$ such that $\int\limits_{0}^{\omega } \alpha _{3} (s){\rm d}s=2\pi k$.

The second assertion of the theorem is proved similarly to the first.
\end{proof}

\begin{proposition}
 In the formulation of Theorem 6:
\begin{romanlist}
\item the condition ``$\exists k\in {\mathbb Z}$ such that $\int\limits_{0}^{\omega } \alpha _{3} (s){\rm d}s=2\pi k$'' can be replaced by the condition that the function $\alpha _{3} (t)$ is odd;

\item the condition ``$\exists k\in {\mathbb Z}$ such that $\int\limits_{0}^{\omega } \left(\alpha _{3} (s)+a^{4} \alpha _{4} (s)\right){\rm d}s=2\pi k$'' can be replaced by the condition that the function $\alpha _{3} (t)+a^{4} \alpha _{4} (t)$ is odd.
\end{romanlist}
\end{proposition}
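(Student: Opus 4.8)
The plan is to mirror the argument already used for Proposition 1, replacing the role of identity \eqref{evm:EQ11} by identity \eqref{evm:EQ12} and the period $2\pi/b$ by $\omega$. Throughout I may assume, exactly as in the formulation of Theorem 6, that $\alpha_3(t)$ is $\omega$-periodic (for part (i)) and that $\alpha_3(t)+a^4\alpha_4(t)$ is $\omega$-periodic (for part (ii)). These periodicity hypotheses are precisely what was needed to derive \eqref{evm:EQ12} in the proof of Theorem 6, and that derivation used only continuity and $\omega$-periodicity of the integrand, never the quantized value $2\pi k$ of the integral; hence \eqref{evm:EQ12} is available here before the integral has been shown to vanish.

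For part (i) I would first specialize identity \eqref{evm:EQ12} to $t=-\omega$, which yields
\[\int\limits_{-\omega}^{0}\alpha_3(s)\,{\rm d}s \equiv \int\limits_{0}^{\omega}\alpha_3(s)\,{\rm d}s.\]
Next, using the assumed oddness of $\alpha_3$, the substitution $s\mapsto -s$ gives $\int_{-\omega}^{0}\alpha_3(s)\,{\rm d}s = -\int_{0}^{\omega}\alpha_3(s)\,{\rm d}s$. Combining the two relations forces $\int_{0}^{\omega}\alpha_3(s)\,{\rm d}s = -\int_{0}^{\omega}\alpha_3(s)\,{\rm d}s$, whence $\int_{0}^{\omega}\alpha_3(s)\,{\rm d}s = 0 = 2\pi\cdot 0$. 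Thus the integral condition of Theorem 6 holds with $k=0\in\mathbb{Z}$, and its conclusion — the $\omega$-periodicity of solution \eqref{evm:EQ9} of system \eqref{evm:EQ5} — follows at once.

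For part (ii) I would repeat these three steps verbatim with $\alpha_3(s)$ replaced by $\alpha_3(s)+a^4\alpha_4(s)$ and with \eqref{evm:EQ12} replaced by its analogue for the second assertion of Theorem 6. Oddness of $\alpha_3(t)+a^4\alpha_4(t)$ again yields $\int_{0}^{\omega}\bigl(\alpha_3(s)+a^4\alpha_4(s)\bigr)\,{\rm d}s=0$, so that $k=0$ works.

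The only point requiring a little care — and the main, if modest, obstacle — is this reuse of identity \eqref{evm:EQ12} under the replaced hypotheses: one must explicitly note that \eqref{evm:EQ12} is a consequence of periodicity alone, independent of the value of $\int_{0}^{\omega}\alpha_3(s)\,{\rm d}s$, so that invoking it is legitimate even before we know the integral vanishes. Everything else is the elementary symmetry computation for odd functions.
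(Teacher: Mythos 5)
Your proposal is correct and follows essentially the same route as the paper: specialize identity \eqref{evm:EQ12} to $t=-\omega$, use oddness of the integrand to force $\int_{0}^{\omega}\alpha_3(s)\,{\rm d}s=0$, and conclude with $k=0$ (and likewise for part (ii)). Your explicit remark that \eqref{evm:EQ12} rests only on continuity and $\omega$-periodicity, not on the quantization condition, is a small but welcome clarification that the paper leaves implicit.
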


\begin{proof}
  From identity \eqref{evm:EQ12} for $t=-\omega $ it follows that $\int\limits_{-\omega }^{0} \alpha _{3} (s){\rm d}s\equiv \int\limits_{0}^{\omega } \alpha _{3} (s){\rm d}s$. Since the function $\alpha _{3} (t)$ is odd, it follows that $\int\limits_{-\omega }^{0} \alpha _{3} (s){\rm d}s\equiv \int\limits_{0}^{\omega } \alpha _{3} (s){\rm d}s$, then $\int\limits_{0}^{\omega } \alpha _{3} (s){\rm d}s=0$, i.e. $k=0$.

The second statement is proved similarly to the first.
\end{proof}

\section{Chaotic attractor}

By Theorem 13 \cite{evm:Yang2018}, for $c=-b$, $d=a$, $e=-2a$ and $ab\ne 0$, system \eqref{evm:EQ2} has two heteroclinic orbits connecting the equilibrium points $O(0,0,0)$ and $G(0,0,-2a)$, the eigenvalues of the Jacobi matrix for which are $\lambda _{1}^{O} =-2a$, $\lambda _{2,3}^{O} =a\pm b\sqrt{-1} $ and $\lambda _{1}^{G} =2a$, $\lambda _{2,3}^{G} =-a\pm b\sqrt{-1} $. Since $\lambda _{1}^{O} \lambda _{1}^{G} =-4a^{2} <0$ and $\Re\left(\lambda _{2}^{O} \right)\Re\left(\lambda _{2}^{G} \right)=-a^{2} <0$, the conditions of Shilnikov's Heteroclinic Theorem \cite{evm:ZhouT2006} are not satisfied. Despite this, one can expect the presence of chaos in system \eqref{evm:EQ2}, which was proved (and also showed a chaotic attractor) by \citet{evm:Belozyorov2015} for the particular case when $a=d=-1/3$, $b=-1$, $c=1$, $e=2/3$.

A numerical simulation (using Wolfram Mathematica software) shows the presence (see Fig. \ref{evm:FIG1}--\ref{evm:FIG2}) of similar chaotic attractors in systems \eqref{evm:EQ2} and \eqref{evm:EQ6} for $a=d=-3$, $b=-8$, $c=8$, $e=6$, $\alpha _{i} (t)=\sin \left(i t\right)$, $i=\overline{1,4}$. In this case, the largest Lyapunov exponent for system \eqref{evm:EQ2} is $\lambda _{\max } =0.0254794$, which confirms the chaotic nature of the attractor. To calculate the Lyapunov exponents, we used the command \(F\left[\{x\_,y\_,z\_\}\right]:=\left\{x z-3 x-8 y,8 x+y z-3 y,-x^2-y^2-z^2+6 z\right\};\ LCEsC[F, \{1/100, 2/100, 3\}, 0.05, 10000, 2, 0.01]\) from the LCE package for Wolfram Mathematica \cite{evm:Sandri1996}.

Note that if the conditions of Theorems 4 or 5 or 6 are satisfied for system \eqref{evm:EQ6}, then system \eqref{evm:EQ6} has a periodic solution, that is, system \eqref{evm:EQ6} demonstrates the coexistence of a periodic solution and a chaotic attractor.

\begin{figure}[h]
\begin{center}
\includegraphics[width=0.49\linewidth]{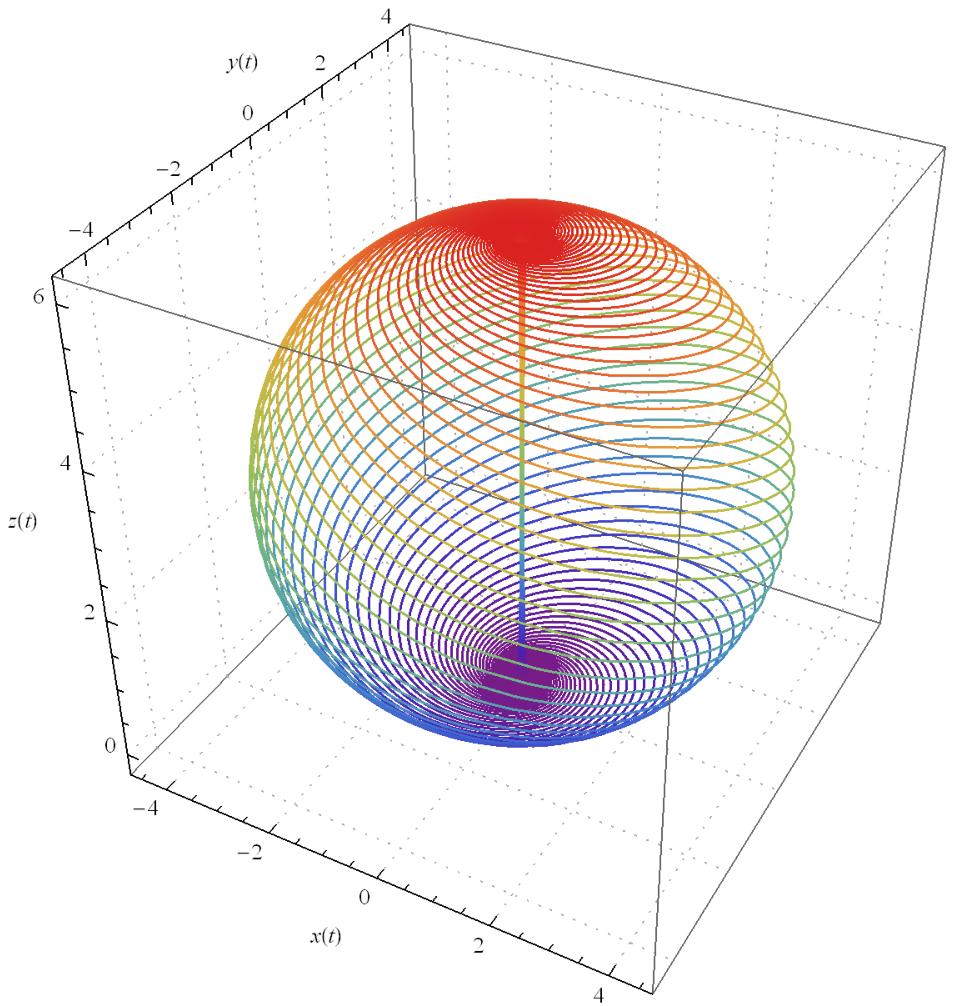}\includegraphics[width=0.49\linewidth]{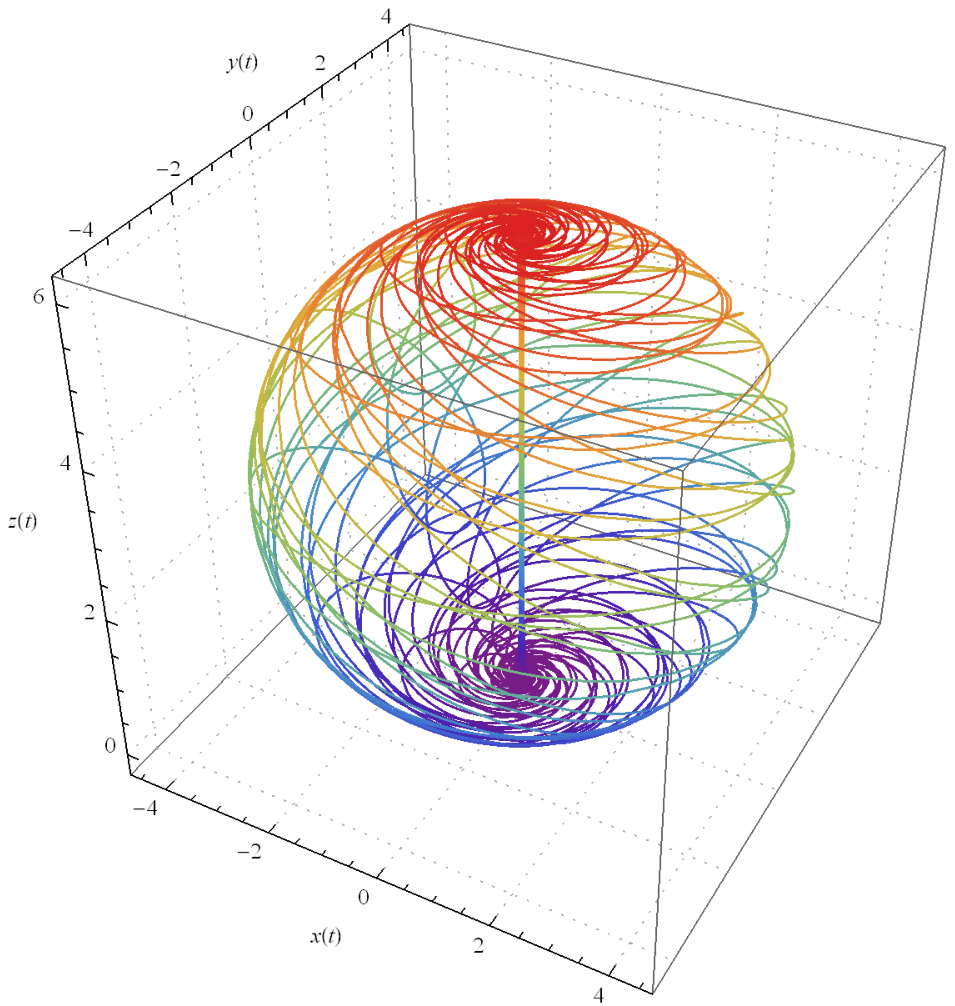}
\end{center}
\caption{Phase portraits of chaotic attractors of systems \eqref{evm:EQ2} and \eqref{evm:EQ6} (left and right, respectively) for $a=d=-3$, $b=-8$, $c=8$, $e=6$, $\alpha _{i} (t)=\sin \left(i t\right)$, $i=\overline{1,4}$.}
\label{evm:FIG1}
\end{figure}

\begin{sidewaysfigure}
\begin{center}
\includegraphics[width=0.33\linewidth]{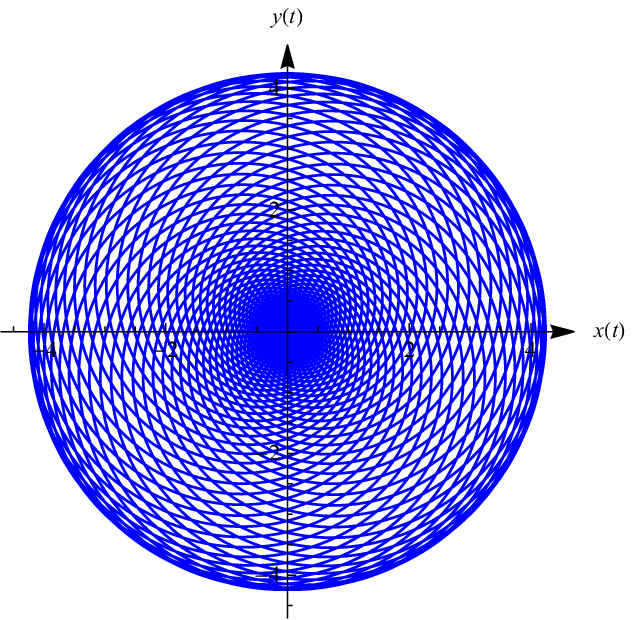}\includegraphics[width=0.33\linewidth]{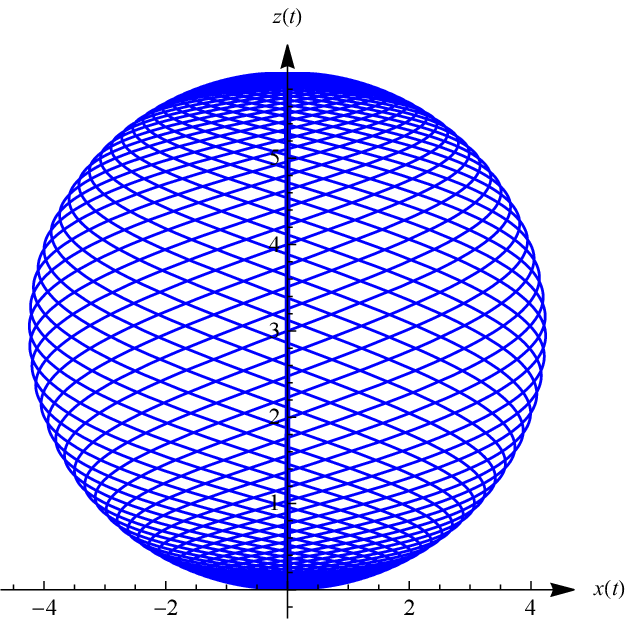}
\includegraphics[width=0.33\linewidth]{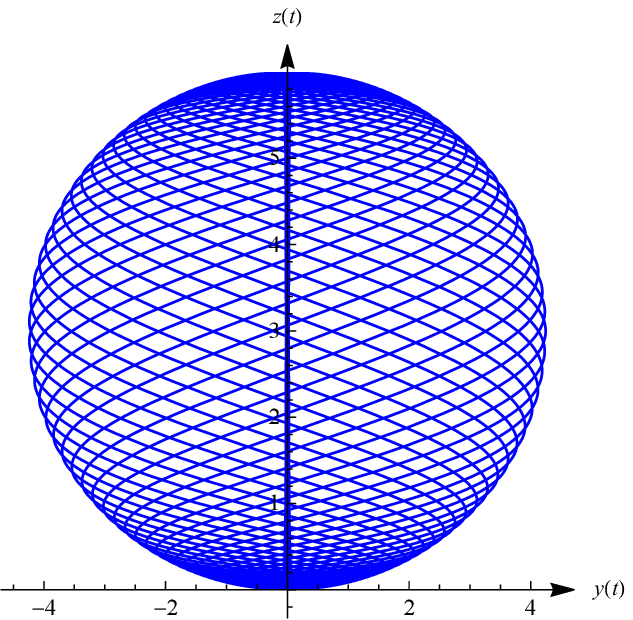}\\
\includegraphics[width=0.33\linewidth]{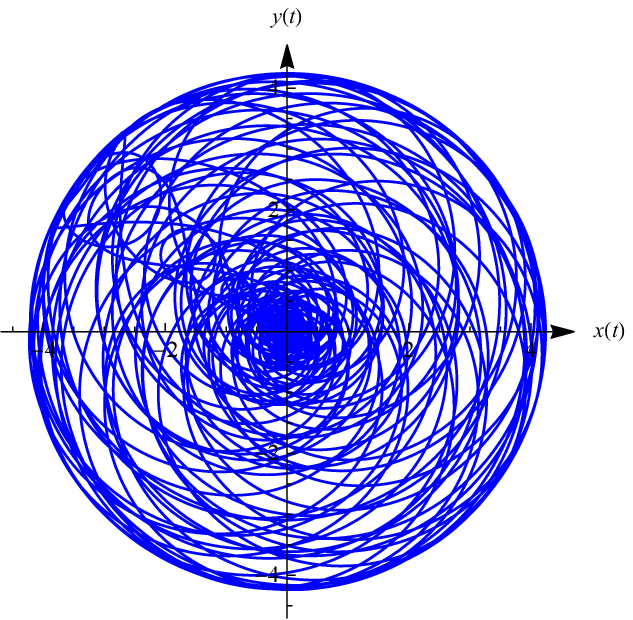}\includegraphics[width=0.33\linewidth]{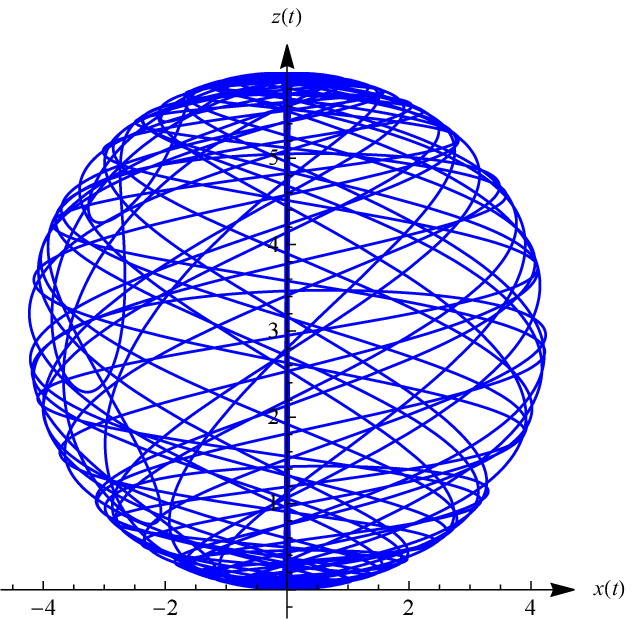}
\includegraphics[width=0.33\linewidth]{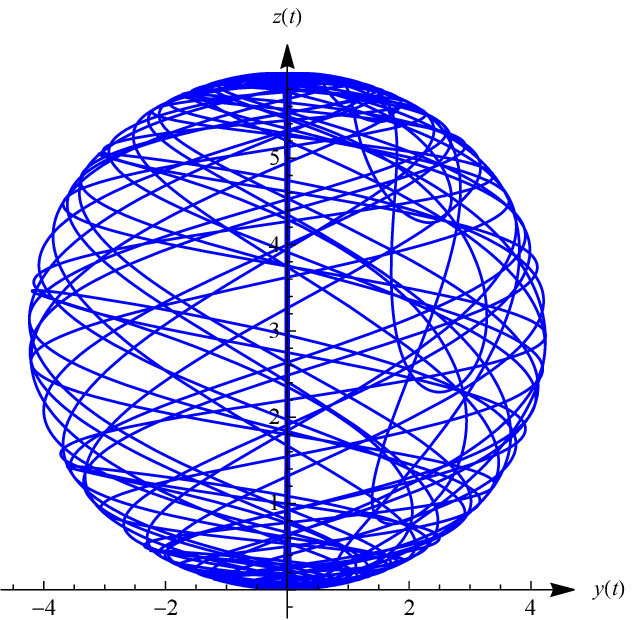}
\end{center}
\caption{Projections onto the coordinate planes of chaotic attractors of systems \eqref{evm:EQ2} and \eqref{evm:EQ6} (top and bottom row, respectively) for $a=d=-3$, $b=-8$, $c=8$, $e=6$, $\alpha _{i} (t)=\sin \left(i t\right)$, $i=\overline{1,4}$.}
\label{evm:FIG2}
\end{sidewaysfigure}

\section{Conclusion}
\noindent
A set of non-stationary systems of ordinary differential equations is obtained, the MRF of which coincides with the MRF of the autonomous generalized Langford system \eqref{evm:EQ2}. The same MRF of these systems determines the coincidence of some qualitative properties of the behavior of their solutions. This made it possible to use the results of studying the qualitative behavior of solutions of the well-studied generalized Langford system [16] to study nonstationary perturbed systems that are more complicated in kind. For such systems (\eqref{evm:EQ5}, \eqref{evm:EQ6}, and \eqref{evm:EQ7}), conditions were obtained under which the equilibrium point is unstable (in the sense of Lyapunov). For systems \eqref{evm:EQ5} and \eqref{evm:EQ6}, conditions were obtained under which these systems have periodic solutions; in addition, for system \eqref{evm:EQ5}, conditions for asymptotic stability (instability) of a periodic solution were obtained. The presence of similar chaotic attractors of systems \eqref{evm:EQ2} and \eqref{evm:EQ6} is shown using a numerical experiment. Moreover, the coexistence of a periodic solution and a chaotic attractor was shown for system \eqref{evm:EQ6}.

\nonumsection{Acknowledgments} \noindent
This research was supported by Horizon2020-2017-RISE-777911 project.

The authors are grateful to Politehnica University of Timisoara, Romania for the hospitality as well to professor Gheorghe Tigan for his support.

\bibliographystyle{ws-ijbc}
\bibliography{evm}
\end{document}